\documentclass{article}
\usepackage[final]{graphicx}
\usepackage{psfrag}
\usepackage{amsmath,amsfonts,amssymb,amsxtra,subeqnarray,bm}

\newlength{\extramargin}
\setlength{\extramargin}{0cm}
\setlength{\oddsidemargin}{\extramargin}
\setlength{\evensidemargin}{\extramargin}
\setlength{\textwidth}{16cm} \setlength{\textheight}{23.0cm}
\setlength{\topmargin}{-1.5cm}

\newcommand{\Real}{\ensuremath{{\mathbb{R}}}}

\newcommand{\Complex}{\ensuremath{{\mathbb{C}}}}

\newcommand{\C}{\ensuremath{\mathcal C}}

\newcommand{\one}{\ensuremath{{\mathbf{1}}}}

\newtheorem{theorem}{Theorem}

\newtheorem{lemma}{Lemma}

\newtheorem{definition}{Definition}
\newtheorem{remark}{Remark}

\newenvironment{proof}{\noindent {\bf Proof.}}{\hfill \hspace*{1pt}\hfill$\blacksquare$}

\begin{document}
\title{Synchronization via impulsive deadbeat coupling}
\author{S. Emre Tuna\footnote{The author is with Department of
Electrical and Electronics Engineering, Middle East Technical
University, 06800 Ankara, Turkey. Email: {\tt etuna@metu.edu.tr}}}
\maketitle

\begin{abstract}
For linear networks, where the coupling between the agents takes
place through periodic impulses, a simple method is proposed for
synchronization. It is shown that closing the loop by (normalized)
deadbeat feedback gain produces synchronous behavior if the coupling
strength $\mu$ is large enough. With such choice of control law, in
the limiting case ($\mu\to\infty$) exact synchronization is achieved
after $n$ periods, where $n$ is the order of individual agent
dynamics.
\end{abstract}

\section{Introduction}\label{sec:intro}

\begin{figure}[h]
\begin{center}
\includegraphics[scale=0.55]{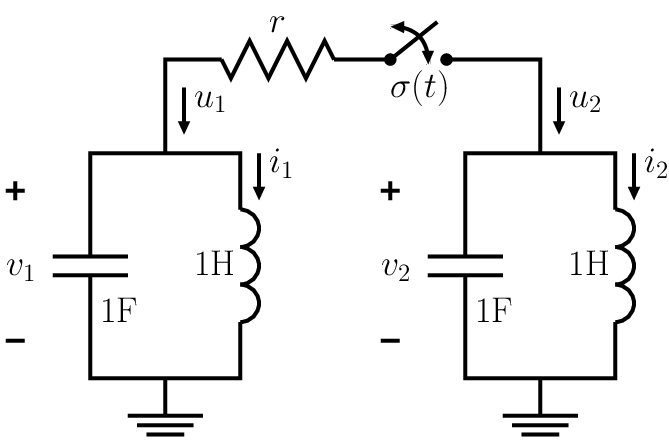}
\caption{Coupled LC oscillators.}\label{fig:motiv}
\end{center}
\end{figure}

\noindent Consider the circuit in Fig.~\ref{fig:motiv} where a pair
of identical LC oscillators are linked by a resistor and a switch.
The position of the switch is represented by the function
$\sigma:\Real\to\{0,\,1\}$. At a given time $t$, $\sigma(t)=1$
indicates that the switch is closed (i.e., the oscillators are
connected) and $\sigma(t)=0$ means the switch is open (i.e., the
oscillators are decoupled). Letting $x_{1}=[v_{1}\ i_{1}]^{\top}$
and $x_{2}=[v_{2}\ i_{2}]^{\top}$ this system enjoys the model
\begin{eqnarray*}
{\dot
x}_{i}&=&\left[\begin{array}{rr}0&-1\\1&0\end{array}\right]x_{i}+\left[\begin{array}{c}1\\
0\end{array}\right]u_{i}\,,\qquad i=1,\,2
\end{eqnarray*}
with the {\em input} currents
\begin{eqnarray*}
u_{1}=-u_{2}=\frac{1}{r}\,\sigma(t)(v_{2}-v_{1})=\frac{1}{r}\,\sigma(t)[1\
\ 0](x_{2}-x_{1})\,.
\end{eqnarray*}
Introducing the matrices
\begin{eqnarray*}
A = \left[\begin{array}{rr}0&-1\\1&0\end{array}\right]\,,\quad B =
\left[\begin{array}{c}1\\ 0\end{array}\right]\,,\quad K = [1\ \
0]\,,\quad\Gamma =
\left[\begin{array}{rr}1&-1\\-1&1\end{array}\right]
\end{eqnarray*}
and defining the overall state $x=[x_{1}^{\top}\
x_{2}^{\top}]^{\top}$ we can represent the network by
\begin{eqnarray*}
{\dot x}=[I\otimes A]x-\frac{1}{r}\,\sigma(t)[\Gamma\otimes BK]x
\end{eqnarray*}
where $I$ is the identity matrix and ``$\otimes\,$'' is the
Kronecker product symbol. Suppose now the switching signal $\sigma$
is a pulse train with period $T$ and pulse width $d$ as shown in
Fig.~\ref{fig:sigma}. Further suppose that the ratio of the pulse
width $d$ to the coupling resistance $r$ is fixed. That is,
$d/r=\mu$ for some $\mu\in\Real$. Then the case $r\ll 1$ can be
approximated by the following impulsive differential equation
\begin{eqnarray}\label{eqn:cast}
{\dot x}=[I\otimes
A]x-\mu\sum_{k=1}^{\infty}\delta(t-kT)[\Gamma\otimes BK]x
\end{eqnarray}
where $\delta$ is the Dirac delta function. It is intuitively clear
that the harmonic oscillators of our simple network will
asymptotically synchronize for any $\mu$ and (almost) any $T$.
However, a remarkable thing happens when $\mu$ is large and the
switching period is set to $T=\frac{\pi}{2}\,$sec. Synchronization
then is (nearly) exact. Namely, $x_{1}(t)=x_{2}(t)$ for $t>2T$. Our
aim in this paper is to provide a formal explanation to this {\em
deadbeat} behavior in a general setting, and thereby obtain a
control design method. Such synchronization method may find
applications in certain networks modeled by impulsive differential
equations.

\begin{figure}[h]
\begin{center}
\includegraphics[scale=0.55]{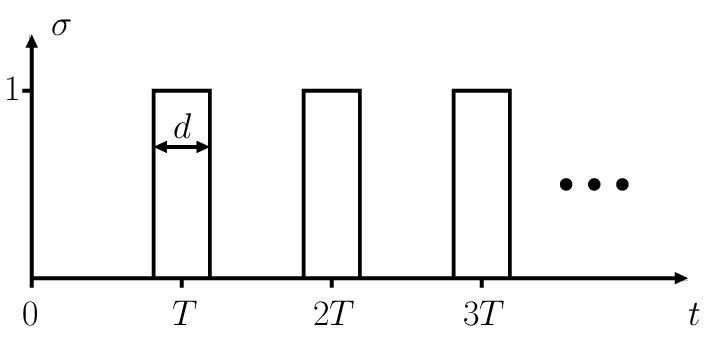}
\caption{Switching signal.}\label{fig:sigma}
\end{center}
\end{figure}

The network dynamics~\eqref{eqn:cast}, which can also be expressed
as \eqref{eqn:onetwo}, make a special case of the so-called
impulsive system \cite{yang01,hespanha08,fiacchini16}. An
interesting area of application for impulsive control theory has
been synchronization over networks. In one of the earliest works on
the subject \cite{yang97}, Yang and Chua establish synchronization
of a pair of chaotic systems via an impulsive control law. In the
same vein, Zhou and coauthors study the collective behavior of
harmonic oscillators subject to impulsive velocity coupling in
\cite{zhou12}. More general setups have also been of interest. For
instance, in \cite{lu10,lu13,he17,wang19} sufficient conditions for
synchronization are presented for a certain class of nonlinear
networks under the assumption that the inner coupling matrix is
positive definite. A leader-follower scheme is employed via
impulsive coupling in \cite{he15}; a method is proposed in
\cite{yang17} by which the agents achieve fixed-time synchronization
through signals communicated from an isolated node. The impulsive
synchronization literature is indeed rich and many other important
results have been reported. For a comprehensive list of recent
research effort we refer the reader to the surveys
\cite{yang19,han20}.

Despite their generality the nonlinear impulsive network models
studied in the related literature do not seem to cover as a special
case the linear impulsive network~\eqref{eqn:cast}, which is what we
intend to investigate in this paper. The coupling we consider being
periodic, it is clear that the behavior of the
network~\eqref{eqn:cast} can be understood through a discrete-time
linear time-invariant (LTI) model. For our analysis therefore we
will not require any tools from impulsive control theory. For the
case with scalar inputs ($u_{i}\in\Real$) and under a standard
connectivity assumption on the communication graph represented by
the Laplacian $\Gamma$, we first show that the units (e.g.,
oscillators) of the network exactly synchronize in finite time if
the matrix $K$ is chosen as the {\em normalized} deadbeat feedback
gain and the coupling strength is arbitrarily large
($\mu\to\infty$). Then, based on this initial observation, we
establish our main result (Theorem~\ref{thm:main}) where we present
a lower bound on $\mu$ for asymptotic synchronization.

\section{Problem statement}

Consider a network of $q$ identical units
\begin{eqnarray}\label{eqn:xi}
{\dot x}_{i}=Ax_{i}+Bu_{i}
\end{eqnarray}
where $x_{i}\in\Real^{n}$ is the state and $u_{i}\in\Real$ the
(scalar) input of the $i$th unit. The matrices $A\in\Real^{n\times
n}$ and $B\in\Real^{n\times 1}$ make a controllable pair $(A,\,B)$.
Available to each unit is a certain signal of the form
\begin{eqnarray}\label{eqn:zi}
z_{i}=\sum_{j=1}^{q}\gamma_{ij}(x_{j}-x_{i})
\end{eqnarray}
with weights $\gamma_{ij}\geq 0$ (we let $\gamma_{ii}=0$). These
signals are used to generate impulsive control inputs
\begin{eqnarray}\label{eqn:ui}
u_{i}=\mu\sum_{k=1}^{\infty}\delta(t-kT)Kz_{i}
\end{eqnarray}
where $\mu>0$ is the coupling strength, $T>0$ is the separation
between two consecutive impulses, and $K\in\Real^{1\times n}$ is the
feedback gain. Combining \eqref{eqn:xi}, \eqref{eqn:zi}, and
\eqref{eqn:ui} produces the closed-loop network dynamics
\begin{eqnarray}\label{eqn:closedloop}
{\dot
x}_{i}=Ax_{i}+\mu\sum_{k=1}^{\infty}\delta(t-kT)BK\left(\sum_{j=1}^{q}\gamma_{ij}(x_{j}-x_{i})\right)\,,\qquad
i=1,\,2,\,\ldots,\,q\,.
\end{eqnarray}
The associated Laplacian $\Gamma\in\Real^{q\times q}$ of this
network reads
\begin{eqnarray}\label{eqn:lap}
\Gamma=\left[\begin{array}{cccc}\sum_{j}\gamma_{1j}&
-\gamma_{12}&\cdots&-\gamma_{1q}\\
-\gamma_{21}&\sum_{j}\gamma_{2j}&\cdots&-\gamma_{2q}\\
\vdots&\vdots&\ddots&\vdots
\\
-\gamma_{q1}&-\gamma_{q2}&\cdots&\sum_{j}\gamma_{qj}\\\end{array}\right]\,.
\end{eqnarray}

\noindent {\bf Standing assumptions.} We make the usual connectivity
assumption that the directed graph of the
network~\eqref{eqn:closedloop} contains a spanning tree. This is
equivalent to saying that $\Gamma$ has a simple eigenvalue at the
origin and the remaining $q-1$ eigenvalues have strictly positive
real parts; see \cite{bullo09}. We further assume that the pair
$(e^{AT},\,B)$ is controllable. Note that this assumption is
satisfied for almost all $T$ since $(A,\,B)$ is controllable.

In this paper we study the collective behavior of the
network~\eqref{eqn:closedloop} for the specific feedback gain $K$
chosen as $K=Ge^{-AT}$ where $G$ is the deadbeat gain for the pair
$(e^{AT},\,B)$, meaning the matrix $\left[e^{AT}-BG\right]$ is
nilpotent. In other words, our goal here is to analyze the impulsive
dynamics~\eqref{eqn:closedloop} under the constraint
\begin{eqnarray}\label{eqn:constraint}
\left[(I-BK)e^{AT}\right]^n=0\,.
\end{eqnarray}
We note that the example network in Section~\ref{sec:intro}
satisfies the condition~\eqref{eqn:constraint} for
$T=\frac{\pi}{2}$.

In the following section we present three lemmas. Those preliminary
results will be invoked later when we set out to establish our main
theorem.

\section{Preliminaries}

Consider the network~\eqref{eqn:closedloop} whose Laplacian $\Gamma$
is given in \eqref{eqn:lap}. Let
$\lambda_{1},\,\lambda_{2},\,\ldots,\,\lambda_{q}$ be the
eigenvalues of $\Gamma$ ordered as $0=\lambda_{1}<{\rm
Re}(\lambda_{2})\leq\cdots\leq{\rm Re}(\lambda_{q})$. Clearly,
$\Gamma\one=0$, where $\one\in\Real^{q}$ is the vector of all ones.
That is, $\one$ is the eigenvector for $\lambda_{1}=0$. We let
$\ell\in\Real^{q}$ denote the left eigenvector for $\lambda_{1}$
(i.e., $\ell^{\top}\Gamma=0$) normalized to satisfy
$\ell^{\top}\one=1$. Note that $e^{-\Gamma t}\to\one\ell^{\top}$ as
$t\to\infty$. Given $Q\in\Complex^{n\times n}$, its conjugate
transpose is denoted by $Q^H$ and its (induced) 2-norm by $\|Q\|$.

Let us construct the (square) controllability matrix $\C=\left[B\ \
e^{AT}B\ \cdots\ e^{A(n-1)T}B\right]$ which is nonsingular since
$(e^{AT},\,B)$ is controllable. Recall that $G\in\Real^{1\times n}$
denotes the deadbeat gain for the pair $(e^{AT},\,B)$. It turns out
that $G$ enjoys the closed-form expression
$G=e_{n}^{\top}\C^{-1}e^{AnT}$ where the unit vector
$e_{n}\in\Real^{n}$ is the $n$th column of the identity matrix; see
\cite{tuna15}. Then, since $K=Ge^{-AT}$, we have
$K=e_{n}^{\top}\C^{-1}e^{A(n-1)T}$. This allows us to claim the
following.

\begin{lemma}\label{lem:KB}
The constraint~\eqref{eqn:constraint} implies $KB=1$.
\end{lemma}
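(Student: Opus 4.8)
The plan is to exploit the explicit formula $K = e_n^\top \C^{-1} e^{A(n-1)T}$ together with the nilpotency hypothesis $[(I-BK)e^{AT}]^n = 0$. First I would write $M := (I-BK)e^{AT}$ and observe that, because $K = Ge^{-AT}$ with $G$ the deadbeat gain for $(e^{AT},B)$, we have $M = (I - BK)e^{AT} = e^{AT} - BKe^{AT} = e^{AT} - BG$, so $M$ is precisely the closed-loop matrix $e^{AT}-BG$, which is nilpotent by the definition of the deadbeat gain. So the constraint \eqref{eqn:constraint} is automatically consistent; the real content of the lemma is that this particular $M$, built from the deadbeat gain, forces $KB=1$.

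Next I would compute $KB$ directly from the closed form. Since $K = e_n^\top \C^{-1} e^{A(n-1)T}$ and the controllability matrix is $\C = [B\ \ e^{AT}B\ \cdots\ e^{A(n-1)T}B]$, the product $e^{A(n-1)T}B$ is exactly the last column of $\C$, i.e. $e^{A(n-1)T}B = \C e_n$. Therefore
\begin{eqnarray*}
KB = e_n^\top \C^{-1} e^{A(n-1)T} B = e_n^\top \C^{-1} \C e_n = e_n^\top e_n = 1\,.
\end{eqnarray*}
This is the whole computation; the identity $KB=1$ drops out of the structure of the deadbeat gain and the definition of $\C$.

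The only subtlety — and the step I would be most careful about — is justifying that $K$ really does have the claimed closed form, hence that $\C^{-1}$ exists. That rests on the standing assumption that $(e^{AT},B)$ is controllable, which makes $\C$ nonsingular, and on the cited closed-form expression $G = e_n^\top \C^{-1} e^{AnT}$ for the deadbeat gain from \cite{tuna15}; combined with $K = Ge^{-AT}$ this gives $K = e_n^\top \C^{-1} e^{A(n-1)T}$ as stated in the paragraph preceding the lemma. Given all of that, no use of the nilpotency constraint beyond its consistency is actually needed for $KB=1$; one could even phrase the proof as: the constraint \eqref{eqn:constraint} is equivalent (under the standing assumptions) to $K$ being of this deadbeat form, and any such $K$ satisfies $KB=1$. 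I would present it in the short direct form above, remarking that $e^{A(n-1)T}B$ is the $n$th column of $\C$ and citing \cite{tuna15} for the closed form of $G$.
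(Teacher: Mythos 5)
Your computation is exactly the paper's own proof: it uses the closed form $K=e_{n}^{\top}\C^{-1}e^{A(n-1)T}$ and the observation that $e^{A(n-1)T}B$ is the last column of $\C$ to get $KB=e_{n}^{\top}\C^{-1}\C e_{n}=1$. Your surrounding remarks about the role of the nilpotency constraint are accurate but the argument itself is the same.
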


\begin{proof}
$KB=e_{n}^{\top}\C^{-1}e^{A(n-1)T}B=e_{n}^{\top}\C^{-1}\left[B\ \
e^{AT}B\ \cdots\ e^{A(n-1)T}B\right]e_{n}=e_{n}^{\top}\C^{-1}\C
e_{n}=e_{n}^{\top}e_{n}=1.$
\end{proof}
\vspace{0.12in}

\noindent {\bf Alternative proof.} Note that \eqref{eqn:constraint}
implies (by Cayley-Hamilton Theorem) that the characteristic
polynomial of the matrix $\left[(I-BK)e^{AT}\right]$ is
$d(s)=s^{n}$. That is, all the eigenvalues are located at the
origin. Let $w\in\Real^{n}$ be a left eigenvector of the matrix,
i.e., $w^{\top}\left[(I-BK)e^{AT}\right]=0$. This implies, since
$e^{AT}$ is nonsingular, $w^{\top}(I-BK)=0$. Solving the last
equation for $K$ yields $K=(w^{\top}B)^{-1}w^{\top}$. Then
$KB=(w^{\top}B)^{-1}w^{\top}B=1$. \hfill\null\hfill$\blacksquare$
\vspace{0.12in}

The next result is evident. Still we provide a demonstration for the
sake of completeness.

\begin{lemma}\label{lem:delta}
Given $M\in\Real^{m\times m}$ and $\tau\in\Real$, let
$y:\Real\to\Real^{m}$ be a solution to the impulsive differential
equation ${\dot y}=\delta(t-\tau)My$. Then
$y(\tau^{+})=e^{M}y(\tau^{-})$.
\end{lemma}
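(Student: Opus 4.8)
The plan is to read the impulsive equation as the $\varepsilon\to 0$ limit of ordinary differential equations in which the Dirac delta is replaced by a narrow pulse, and to integrate that approximation explicitly across the pulse. For $\varepsilon>0$ let $\delta_\varepsilon$ be the function equal to $1/\varepsilon$ on $[0,\varepsilon)$ and zero elsewhere, and let $y_\varepsilon$ solve $\dot y_\varepsilon=\delta_\varepsilon(t-\tau)My_\varepsilon$ with $y_\varepsilon$ agreeing with $y$ on $(-\infty,\tau)$; the solution $y$ of ${\dot y}=\delta(t-\tau)My$ is by definition the pointwise limit of $y_\varepsilon$ as $\varepsilon\to 0$. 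Since $\delta_\varepsilon(\cdot-\tau)$ vanishes to the left of $\tau$, we have $y_\varepsilon(\tau)=y(\tau^{-})$ for every $\varepsilon$.

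The core of the argument is the behaviour on the pulse. On $[\tau,\tau+\varepsilon)$ the equation is the linear time-invariant system $\dot y_\varepsilon=\tfrac{1}{\varepsilon}My_\varepsilon$, so $y_\varepsilon(t)=\exp\!\big(\tfrac{t-\tau}{\varepsilon}M\big)y(\tau^{-})$; evaluating at the right end of the pulse, the factors of $\varepsilon$ cancel (because $\int_\tau^{\tau+\varepsilon}\delta_\varepsilon(s-\tau)\,ds=1$) and we get $y_\varepsilon(\tau+\varepsilon)=e^{M}y(\tau^{-})$. For $t\ge\tau+\varepsilon$ the right-hand side is zero, hence $y_\varepsilon(t)=e^{M}y(\tau^{-})$ is constant there. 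Letting $\varepsilon\to 0$ yields $y(t)=e^{M}y(\tau^{-})$ for every $t>\tau$, and in particular $y(\tau^{+})=\lim_{t\downarrow\tau}y(t)=e^{M}y(\tau^{-})$. One can also guess this formula immediately from the ansatz $y(t)=e^{\Theta(t-\tau)M}y(\tau^{-})$ with $\Theta$ the Heaviside step, since formally $\dot\Theta=\delta$ reproduces the equation; I would nevertheless keep the pulse computation as the actual proof.

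The only genuinely delicate point is definitional rather than computational: the product $\delta(t-\tau)\,y(t)$ is not meaningful for a discontinuous $y$ without fixing a regularization, and a priori different regularizations could produce different jump maps. The pulse family above is the canonical choice, and it is precisely the one consistent with the physical switching model of Section~\ref{sec:intro} (a genuine pulse of width $d$ with gain scaled by $1/d$, over which the state is transformed by $e^{M}$ up to $O(d)$ terms from the drift). Once this convention is adopted there is no well-posedness issue, and what remains is the one-line integration carried out above.
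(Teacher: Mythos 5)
Your proof is correct and follows essentially the same route as the paper: replace the Dirac delta by a unit-area pulse, integrate the resulting LTI system explicitly across the pulse to get the factor $e^{M}$, and pass to the limit $\varepsilon\to 0$. The only cosmetic difference is that the paper uses a symmetric pulse of height $1/(2\varepsilon)$ on $[-\varepsilon,\varepsilon]$ rather than your one-sided pulse, which changes nothing in the computation; your added remark on the regularization-dependence of $\delta(t-\tau)y(t)$ for discontinuous $y$ is a fair point the paper leaves implicit.
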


\begin{proof}
Without loss of generality let $\tau=0$. For $\varepsilon>0$ let the
pulse function $p_{\varepsilon}:\Real\to\Real$ be defined as
\begin{eqnarray*}
p_{\varepsilon}(t)=\left\{\begin{array}{cl}\displaystyle\frac{1}{2\varepsilon}
&\mbox{for}\ \displaystyle
t\in\left[-\varepsilon,\,\varepsilon\right] \vspace{0.1in}\\
0&\mbox{elsewhere}\,.\end{array}\right.
\end{eqnarray*}
Note that $\lim_{\varepsilon\to 0}p_{\varepsilon}(t)=\delta(t)$.
Now, given some $y(\cdot)$ satisfying ${\dot y}=\delta(t)My$ let
$y_{\varepsilon}(\cdot)$ be {\em the} solution to the approximate
differential equation ${\dot
y}_{\varepsilon}=p_{\varepsilon}(t)My_{\varepsilon}$ starting from
the initial condition $y_{\varepsilon}(-\varepsilon)=y(0^{-})$. It
is easy to see that
$y_{\varepsilon}(\varepsilon)=e^{M}y_{\varepsilon}(-\varepsilon)=e^{M}y(0^{-})$.
Therefore
\begin{eqnarray*}
y(0^{+})=\lim_{\varepsilon\to 0}y_{\varepsilon}(\varepsilon)
=e^{M}y(0^{-})
\end{eqnarray*}
which was to be shown.
\end{proof}
\vspace{0.12in}

The following is a result about the {\em Schur decomposition} of the
matrix $\left[(I-BK)e^{AT}\right]$. We will need it for our main
theorem.

\begin{lemma}
There exists a strictly upper triangular $N\in\Complex^{n\times n}$
satisfying
\begin{eqnarray}\label{eqn:schur}
Q^{H}\left[(I-BK)e^{AT}\right]Q=N
\end{eqnarray}
for some unitary $Q\in\Complex^{n\times n}$.
\end{lemma}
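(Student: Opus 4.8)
The plan is to read~\eqref{eqn:constraint} as a nilpotency statement about the matrix $M:=(I-BK)e^{AT}$ and then let Schur's theorem do the rest. First I would recall that every square complex matrix admits a Schur decomposition: there exist a unitary $Q\in\Complex^{n\times n}$ and an upper triangular $N\in\Complex^{n\times n}$ with $Q^{H}MQ=N$. Since $M$ is real, it certainly qualifies, so~\eqref{eqn:schur} holds with $N$ upper triangular. The only thing left to argue is that $N$ is \emph{strictly} upper triangular, that is, that its diagonal entries all vanish.

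For this I would use two standard facts. On the one hand, $Q^{H}MQ=N$ means $M$ and $N$ are similar (unitary similarity is similarity), and the diagonal entries of the triangular matrix $N$ are precisely its eigenvalues, hence precisely the eigenvalues of $M$. On the other hand, the constraint~\eqref{eqn:constraint} reads $M^{n}=0$, so $M$ is nilpotent; and a nilpotent matrix has only the zero eigenvalue, since $Mv=\lambda v$ with $v\neq 0$ forces $0=M^{n}v=\lambda^{n}v$ and thus $\lambda=0$. Combining the two, every diagonal entry of $N$ equals zero, so $N$ is strictly upper triangular, which was to be shown.

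I do not expect any genuine obstacle: the statement is essentially a corollary of the Schur decomposition theorem, the one substantive observation being that~\eqref{eqn:constraint} is a nilpotency condition and therefore pins the spectrum of $M$ to $\{0\}$. If a more self-contained route is preferred, one could instead construct $Q$ by applying Gram--Schmidt to a basis adapted to the flag $\ker M\subset\ker M^{2}\subset\cdots\subset\ker M^{n}=\Complex^{n}$ of the nilpotent operator $M$, which renders the strict upper triangularity manifest; but invoking Schur directly is shorter and suffices here.
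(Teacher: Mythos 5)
Your proof is correct and follows the same route as the paper: invoke the Schur decomposition theorem and observe that the constraint~\eqref{eqn:constraint} makes $(I-BK)e^{AT}$ nilpotent, so all its eigenvalues (the diagonal of $N$) vanish. The paper simply cites the Schur theorem and the nilpotency without spelling out the eigenvalue argument, which you supply explicitly.
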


\begin{proof}
Recall that all the eigenvalues of $\left[(I-BK)e^{AT}\right]$ are
at the origin by \eqref{eqn:constraint}. The result then follows by
\cite[Thm.~7.1.3]{golub96}.
\end{proof}
\vspace{0.12in}

We end this section with the definition of a synchronous network.

\begin{definition}
The network~\eqref{eqn:closedloop} is said to be {\em synchronous}
if $\|x_{i}(t)-x_{j}(t)\|\to 0$ as $t\to\infty$ for all pairs
$(i,\,j)$ and all initial conditions.
\end{definition}

\section{Main result}

In this section we analyze the collective behavior of the
network~\eqref{eqn:closedloop} under the
condition~\eqref{eqn:constraint} with respect to the coupling
strength $\mu$. The (continuous-time) system~\eqref{eqn:closedloop}
under study is linear time-varying, yet periodic. This means we can
represent it by an LTI model in discrete time. We now obtain this
model. Let $x=[x_{1}^{\top}\ x_{2}^{\top}\ \cdots\
x_{q}^{\top}]^{\top}$. Then \eqref{eqn:closedloop} can be cast into
\eqref{eqn:cast} which yields for $k=1,\,2,\,\ldots$
\begin{subeqnarray}\label{eqn:onetwo}
{\dot x}(t)&=&[I\otimes A]x(t)\,, \quad t\neq kT \\
x(kT^{+})&=&e^{-\mu[\Gamma\otimes BK]}x(kT^{-})
\end{subeqnarray}
where (\ref{eqn:onetwo}b) follows from Lemma~\ref{lem:delta}.
Observe that (\ref{eqn:onetwo}a) allows us to write
$x(kT^{-})=e^{T[I\otimes A]}x((k-1)T^{+})$. Combining this with
(\ref{eqn:onetwo}b) we obtain
\begin{eqnarray}\label{eqn:three}
x(kT^{+})=e^{-\mu[\Gamma\otimes BK]}e^{T[I\otimes
A]}x((k-1)T^{+})\,.
\end{eqnarray}

\begin{lemma}\label{lem:workable} We can write
$e^{-\mu[\Gamma\otimes BK]}e^{T[I\otimes
A]}=\left[e^{-\Gamma\mu}\otimes
e^{AT}\right]+\left[(I-e^{-\Gamma\mu})\otimes(I-BK)e^{AT}\right]$.
\end{lemma}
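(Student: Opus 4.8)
The plan is to expand the exponential $e^{-\mu[\Gamma\otimes BK]}$ into a power series and exploit the key algebraic fact supplied by Lemma~\ref{lem:KB}, namely $KB=1$, to collapse the series. Write $P=BK\in\Real^{n\times n}$. Then $KB=1$ gives $P^2 = B(KB)K = BK = P$, so $P$ is idempotent, and consequently $(\Gamma\otimes P)^m = \Gamma^m\otimes P^m = \Gamma^m\otimes P$ for every integer $m\geq 1$. This is the structural observation that makes the computation go through.

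First I would use $(\Gamma\otimes P)^m=\Gamma^m\otimes P$ for $m\geq1$ to sum the matrix exponential termwise:
\begin{eqnarray*}
e^{-\mu[\Gamma\otimes BK]} &=& I\otimes I+\sum_{m=1}^{\infty}\frac{(-\mu)^m}{m!}(\Gamma^m\otimes P)
= I\otimes I+\left(\sum_{m=1}^{\infty}\frac{(-\mu)^m}{m!}\Gamma^m\right)\otimes P \\
&=& I\otimes I+\left(e^{-\Gamma\mu}-I\right)\otimes BK\,.
\end{eqnarray*}
Next I would multiply on the right by $e^{T[I\otimes A]}=I\otimes e^{AT}$ (using the standard identity $e^{I\otimes A}=I\otimes e^{A}$ together with $(I\otimes A)^m=I\otimes A^m$). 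Distributing the Kronecker product over the sum and using the mixed-product rule $(X_1\otimes Y_1)(X_2\otimes Y_2)=(X_1X_2)\otimes(Y_1Y_2)$ yields
\begin{eqnarray*}
e^{-\mu[\Gamma\otimes BK]}e^{T[I\otimes A]}
&=& \big(I\otimes e^{AT}\big)+\big((e^{-\Gamma\mu}-I)\otimes BKe^{AT}\big)\,.
\end{eqnarray*}

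Finally I would rearrange the two terms into the claimed form. Split the first term as $I\otimes e^{AT} = e^{-\Gamma\mu}\otimes e^{AT} + (I-e^{-\Gamma\mu})\otimes e^{AT}$, and combine the $(I-e^{-\Gamma\mu})\otimes(\,\cdot\,)$ pieces: the second summand above is $-(I-e^{-\Gamma\mu})\otimes BKe^{AT}$, so adding gives $(I-e^{-\Gamma\mu})\otimes(e^{AT}-BKe^{AT})=(I-e^{-\Gamma\mu})\otimes(I-BK)e^{AT}$. Together with the leftover $e^{-\Gamma\mu}\otimes e^{AT}$ this is exactly the identity in the statement. The only genuinely non-routine step is recognizing that $KB=1$ makes $BK$ idempotent and hence lets the exponential series telescope onto $e^{-\Gamma\mu}$; everything after that is bookkeeping with the Kronecker mixed-product rule. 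I expect no real obstacle beyond being careful that $(\Gamma\otimes P)^0=I\otimes I$ is handled separately from the $m\geq1$ terms, since the identity $P^m=P$ fails at $m=0$.
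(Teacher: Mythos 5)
Your proof is correct and follows essentially the same route as the paper's: use $KB=1$ to make $BK$ idempotent, collapse the exponential series to $I+(e^{-\Gamma\mu}-I)\otimes BK$, then multiply by $I\otimes e^{AT}$ and regroup. The only difference is that you multiply by $I\otimes e^{AT}$ before the final rearrangement while the paper rearranges first, which is immaterial.
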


\begin{proof}
Recall $KB=1$ by Lemma~\ref{lem:KB}. This at once yields
$(BK)^{k}=BK$ for $k=1,\,2,\,\ldots$ (Namely, $[BK]$ is a projection
matrix.) Using the series definition of the matrix exponential we
can then write
\begin{eqnarray*}
e^{-\mu[\Gamma\otimes BK]}
&=& e^{[-\Gamma\mu\otimes BK]}\\
&=& I+[-\Gamma\mu\otimes BK]+[-\Gamma\mu\otimes
BK]^{2}+[-\Gamma\mu\otimes BK]^{3}+\cdots\\
&=& I+[-\Gamma\mu\otimes BK]+\left[(-\Gamma\mu)^{2}\otimes
(BK)^{2}\right]+\left[(-\Gamma\mu)^{3}\otimes
(BK)^{3}\right]+\cdots\\
&=& I+\left[-\Gamma\mu\otimes
BK\right]+\left[(-\Gamma\mu)^{2}\otimes
BK\right]+\left[(-\Gamma\mu)^{3}\otimes
BK\right]+\cdots\\
&=&
I+\left[(-\Gamma\mu+(-\Gamma\mu)^{2}+(-\Gamma\mu)^{3}+\cdots)\otimes
BK\right]\\
&=&I+\left[(e^{-\Gamma\mu}-I)\otimes BK\right]\,.
\end{eqnarray*}
And it is easy to check that
\begin{eqnarray*}
I+\left[(e^{-\Gamma\mu}-I)\otimes
BK\right]=\left[e^{-\Gamma\mu}\otimes
I\right]+\left[(I-e^{-\Gamma\mu})\otimes(I-BK)\right]\,.
\end{eqnarray*}
Since $e^{T[I\otimes A]}=e^{[I\otimes AT]}=\left[I\otimes
e^{AT}\right]$ it follows that
\begin{eqnarray*}
e^{-\mu[\Gamma\otimes BK]}e^{T[I\otimes
A]}&=&\left(\left[e^{-\Gamma\mu}\otimes
I\right]+\left[(I-e^{-\Gamma\mu})\otimes(I-BK)\right]\right)\left[I\otimes
e^{AT}\right]\\
&=&\left[e^{-\Gamma\mu}\otimes
e^{AT}\right]+\left[(I-e^{-\Gamma\mu})\otimes(I-BK)e^{AT}\right]
\end{eqnarray*}
which was to be shown.
\end{proof}
\vspace{0.12in}

For a cleaner presentation we introduce the notation
$x_{i}[k]=x_{i}(kT^{+})$ with $x_{i}[0]=x_{i}(0)$. Then by
\eqref{eqn:three} and Lemma~\ref{lem:workable} we obtain
\begin{eqnarray}\label{eqn:DT}
x[k+1]=\Big(\left[e^{-\Gamma\mu}\otimes
e^{AT}\right]+\left[(I-e^{-\Gamma\mu})\otimes(I-BK)e^{AT}\right]\Big)x[k]
\end{eqnarray}
for $k=0,\,1,\,\ldots$ This is the discrete-time LTI representation
of the network~\eqref{eqn:closedloop} when \eqref{eqn:constraint}
holds.

\begin{remark}\label{rem:one}
Note that if $\|x_{i}[k]-x_{j}[k]\|\to 0$ as $k\to\infty$ for all
pairs $(i,\,j)$ and all initial conditions then the
network~\eqref{eqn:closedloop} is synchronous. Moreover, if for some
$k$ we have $x_{i}[k]=x_{j}[k]$ for all $(i,\,j)$ then
$x_{i}(t)=x_{j}(t)$ for all $(i,\,j)$ and $t>kT$.
\end{remark}

Below is our main result.

\begin{theorem}\label{thm:main}
Consider the network~\eqref{eqn:closedloop} under the
condition~\eqref{eqn:constraint}. Suppose
\begin{eqnarray}\label{eqn:mu}
\mu>\frac{1}{{\rm Re}(\lambda_{2})}\ln\left(
\left\|BKe^{AT}\right\|\times\sum_{k=0}^{n-1}\|N\|^{k}\right)
\end{eqnarray}
where $N$ comes from the Schur decomposition~\eqref{eqn:schur}. Then
the network is synchronous.
\end{theorem}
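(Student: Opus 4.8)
The plan is to analyze the discrete-time system~\eqref{eqn:DT} by changing coordinates so that the consensus direction $\one$ decouples from the disagreement dynamics. Since $\Gamma$ has a simple zero eigenvalue with right eigenvector $\one$ and left eigenvector $\ell^{\top}$, I would pick an invertible $S = [\one \ \ V]$ (with $V \in \Real^{q \times (q-1)}$ chosen so that the first row of $S^{-1}$ is $\ell^{\top}$) such that $S^{-1}\Gamma S = \mathrm{blockdiag}(0,\,\Gamma_{r})$, where $\Gamma_{r} \in \Complex^{(q-1)\times(q-1)}$ carries the eigenvalues $\lambda_{2},\ldots,\lambda_{q}$, all with positive real part. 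Applying $[S^{-1}\otimes I]$ to~\eqref{eqn:DT}, the first block (the $\one\ell^{\top}$ component) evolves under $e^{-\Gamma\mu}|_{\text{that block}} = I$ and $(I-e^{-\Gamma\mu})|_{\text{that block}} = 0$, so it just gets multiplied by $e^{AT}$ each step; call this component $\bar x[k] = [\ell^{\top}\otimes I]x[k]$, which is the "average" trajectory. The remaining $(q-1)n$-dimensional block $\xi[k]$ evolves under $\big[e^{-\Gamma_{r}\mu}\otimes e^{AT}\big] + \big[(I-e^{-\Gamma_{r}\mu})\otimes (I-BK)e^{AT}\big]$. Since $\|x_i[k] - x_j[k]\| \to 0$ for all pairs is equivalent to $\xi[k] \to 0$, by Remark~\ref{rem:one} it suffices to show this block is a stable matrix, i.e., has spectral radius less than $1$.

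Next I would bound the norm of the $n$-step (or $n$-block) map. The key structural fact is that $(I-BK)e^{AT}$ is nilpotent of index $n$ by~\eqref{eqn:constraint}, and via the Schur decomposition~\eqref{eqn:schur} we have $(I-BK)e^{AT} = QNQ^{H}$ with $N$ strictly upper triangular, hence $N^{n}=0$. The cross term $e^{AT}$ can be written $e^{AT} = (I-BK)e^{AT} + BKe^{AT}$, so $e^{AT} = QNQ^{H} + BKe^{AT}$. I would substitute this into the $\xi$-block matrix and expand, aiming to show that after conjugating by $[I\otimes Q]$ the matrix becomes (upper block triangular)-like with the "diagonal" controlled by $e^{-\Gamma_{r}\mu}\otimes N$ — whose powers vanish after $n$ steps because $N^n = 0$ — plus terms each carrying at least one factor of $(I-e^{-\Gamma_{r}\mu})$ times $BKe^{AT}$. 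Concretely, I expect that raising the $\xi$-block matrix to the power $n$, or more cleanly estimating its norm directly, yields a bound of the form $\|e^{-\Gamma_{r}\mu}\|$ times $\|BKe^{AT}\|\sum_{k=0}^{n-1}\|N\|^{k}$, since each of the $n$ "levels" contributes one $N$-power and the geometric-type sum $\sum_{k=0}^{n-1}\|N\|^k$ collects them. Using $\|e^{-\Gamma_{r}\mu}\| \to 0$ like $e^{-\mu\,\mathrm{Re}(\lambda_2)}$ — more precisely, that for $\mu$ large enough $\|e^{-\Gamma_{r}\mu}\| < e^{-\mu\,\mathrm{Re}(\lambda_2)}\cdot(\text{lower order})$, or working with a submultiplicative bound — the hypothesis~\eqref{eqn:mu} is exactly the threshold making this product strictly less than $1$, which forces $\xi[k]\to 0$ geometrically.

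The main obstacle is making the bound on $\|e^{-\Gamma_{r}\mu}\|$ clean: $\Gamma_{r}$ need not be normal, so $\|e^{-\Gamma_{r}\mu}\|$ is not simply $e^{-\mu\,\mathrm{Re}(\lambda_2)}$, and in general carries polynomial-in-$\mu$ Jordan-block factors. I would handle this either by noting that~\eqref{eqn:mu} is a strict inequality, so for $\mu$ above the stated bound one still has, for $\mu$ sufficiently large, $\|e^{-\Gamma_{r}\mu}\| \le e^{-\mu\,\mathrm{Re}(\lambda_2)}$ after absorbing a constant — but to get the theorem exactly as stated one should instead bound each disagreement mode separately. Decomposing $\Gamma_r$ (or working eigenvalue-by-eigenvalue on a further Schur/Jordan reduction of $\Gamma_r$) lets one replace $\|e^{-\Gamma_r \mu}\|$ on the relevant invariant subspace by $e^{-\mu\,\mathrm{Re}(\lambda_j)} \le e^{-\mu\,\mathrm{Re}(\lambda_2)}$ up to the off-diagonal nilpotent part, which can be folded into the $\sum\|N\|^k$ factor or shown to not affect the spectral radius. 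The second delicate point is the bookkeeping in the expansion of the $n$th power: one must verify that every surviving term indeed has the claimed form with exactly the factor $\|BKe^{AT}\|\sum_{k=0}^{n-1}\|N\|^k$ and no extra $\mu$-independent constant larger than $1$; I would do this by induction on the power, tracking how each multiplication either advances the $N$-nilpotency index or introduces a fresh $(I-e^{-\Gamma_r\mu})BKe^{AT}$ block, and using $\|I - e^{-\Gamma_r\mu}\| \le 1 + \|e^{-\Gamma_r\mu}\|$ together with $KB = 1$ (Lemma~\ref{lem:KB}) to keep the projection structure under control.
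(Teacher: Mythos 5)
Your first step --- block-diagonalizing $\Gamma$ with $S=[\one\ \ V]$, isolating the average trajectory $\bar x[k]=[\ell^{\top}\otimes I]x[k]$, and reducing synchronization to showing that the disagreement map
$\Phi=[I\otimes (I-BK)e^{AT}]+[e^{-\Gamma_{r}\mu}\otimes BKe^{AT}]$
has spectral radius less than one --- is exactly the paper's reduction (the paper takes $\Lambda$ upper triangular from the outset, which is the ``further Schur reduction of $\Gamma_r$'' you mention; it makes $\Phi$ block upper triangular with diagonal blocks $D_{i}=(I-BK)e^{AT}+e^{-\lambda_{i}\mu}BKe^{AT}$, so that only the scalars $|e^{-\lambda_{i}\mu}|=e^{-\mu\,\mathrm{Re}(\lambda_{i})}$ matter and the non-normality of $\Gamma_r$ never enters). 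Up to that point you are on track.

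The gap is in how you propose to conclude. Bounding $\|\Phi^{n}\|$ (or $\|D_i^n\|$) by expanding in words of $M:=(I-BK)e^{AT}$ and $P:=BKe^{AT}$ and using $M^{n}=0$ does not produce the threshold~\eqref{eqn:mu}: the surviving terms give an estimate of the form $(\|M\|+\epsilon\|P\|)^{n}-\|M\|^{n}$ with $\epsilon=\|e^{-\Gamma_r\mu}\|$, whose leading contribution is roughly $n\,\epsilon\,\|P\|\,\|M\|^{n-1}$ plus higher-order terms in $\epsilon$ --- this is not the quantity $\epsilon\,\|P\|\sum_{k=0}^{n-1}\|N\|^{k}$, and since $\|M\|=\|N\|$ may well exceed $1$, requiring the power-norm bound to be below $1$ is a strictly more demanding condition than~\eqref{eqn:mu} (off by a factor of about $n$, among other things). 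The missing ingredient is an \emph{eigenvalue perturbation} result rather than a norm estimate: the paper invokes \cite[Thm.~7.2.3]{golub96} (a Bauer--Fike--Henrici bound for a matrix given its Schur form), which states that every eigenvalue of $M+E$ lies within $\max(\theta,\theta^{1/n})$ of an eigenvalue of $M$, where $\theta=\|E\|\sum_{k=0}^{n-1}\|N\|^{k}$; the sum $\sum_{k=0}^{n-1}\|N\|^{k}$ arises there from a resolvent bound on $(zI-N)^{-1}$, not from counting terms in a power expansion. Since all eigenvalues of $M$ sit at the origin, $\theta<1$ with $E=e^{-\lambda_{i}\mu}BKe^{AT}$ immediately forces $\rho(D_{i})<1$, and~\eqref{eqn:mu} is precisely $\theta<1$ for the worst mode $i=2$. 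Without this perturbation theorem, the product $e^{-\mu\,\mathrm{Re}(\lambda_2)}\|BKe^{AT}\|\sum_{k}\|N\|^{k}$ is not a bound on any norm of $\Phi$ or of its powers, and your bookkeeping induction cannot close with the stated constant.
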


Prior to demonstrating Theorem~\ref{thm:main} we find it worthwhile
to present its adumbration.

\begin{theorem}\label{thm:adumbration}
Consider the network~\eqref{eqn:closedloop} under the
condition~\eqref{eqn:constraint}. Suppose $\mu=\infty$. Then the
solutions satisfy $x_{i}(t)=x_{j}(t)$ for all $(i,\,j)$ and $t>nT$.
\end{theorem}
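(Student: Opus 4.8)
The plan is to pass to the limit $\mu\to\infty$ in the discrete-time model \eqref{eqn:DT} and then read off the state after $n$ periods using the nilpotency condition \eqref{eqn:constraint}. Write $P=\one\ell^{\top}$. Recall that $e^{-\Gamma t}\to\one\ell^{\top}$ as $t\to\infty$, so letting $\mu=\infty$ replaces $e^{-\Gamma\mu}$ by $P$; moreover $P^{2}=\one(\ell^{\top}\one)\ell^{\top}=P$, so $P$ and $I-P$ are complementary projections. Hence the $\mu=\infty$ dynamics obtained from \eqref{eqn:DT} (equivalently, the limiting one-step map from Lemma~\ref{lem:workable}) read $x[k+1]=M_{\infty}x[k]$ with
\[
M_{\infty}=\left[P\otimes e^{AT}\right]+\left[(I-P)\otimes(I-BK)e^{AT}\right].
\]

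First I would compute the powers of $M_{\infty}$. Because $P(I-P)=(I-P)P=0$ while $P^{2}=P$ and $(I-P)^{2}=I-P$, the cross terms in $(A\otimes B)(C\otimes D)=(AC)\otimes(BD)$ all vanish, and an easy induction gives
\[
M_{\infty}^{k}=\left[P\otimes\big(e^{AT}\big)^{k}\right]+\left[(I-P)\otimes\big((I-BK)e^{AT}\big)^{k}\right],\qquad k=1,2,\ldots
\]
Taking $k=n$ and invoking \eqref{eqn:constraint} kills the second summand, so $M_{\infty}^{n}=P\otimes e^{AnT}=\one\ell^{\top}\otimes e^{AnT}$.

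Next I would extract the conclusion. Since $x[n]=M_{\infty}^{n}x[0]=\big(\one\ell^{\top}\otimes e^{AnT}\big)x[0]$ and the $(i,j)$ block of $\one\ell^{\top}\otimes e^{AnT}$ equals $\ell_{j}\,e^{AnT}$, which does not depend on $i$, every block of $x[n]$ is the common vector $\sum_{j=1}^{q}\ell_{j}\,e^{AnT}x_{j}[0]$; that is, $x_{i}[n]=x_{j}[n]$ for all pairs $(i,j)$. Applying the second assertion of Remark~\ref{rem:one} with $k=n$ then yields $x_{i}(t)=x_{j}(t)$ for all $(i,j)$ and all $t>nT$, as claimed.

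I do not anticipate a genuine obstacle: the content is the short algebra of complementary projections under Kronecker products together with \eqref{eqn:constraint}. The only point needing a word of care is the meaning of ``$\mu=\infty$,'' which I would fix as the (uniform) limit $e^{-\Gamma\mu}\to\one\ell^{\top}$ in the one-step map, so that the $\mu=\infty$ evolution is exactly $x[k+1]=M_{\infty}x[k]$.
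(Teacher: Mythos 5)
Your proposal is correct and follows essentially the same route as the paper: pass to the limit $e^{-\Gamma\mu}\to\one\ell^{\top}$ in \eqref{eqn:DT}, use the complementary-projection identities for $\one\ell^{\top}$ and $I-\one\ell^{\top}$ together with the Kronecker mixed-product rule to compute the $n$th power, invoke \eqref{eqn:constraint} to annihilate the second summand, and finish with Remark~\ref{rem:one}. No gaps.
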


\begin{proof}
Consider \eqref{eqn:DT}. Recall $e^{-\Gamma \mu}\to\one\ell^{\top}$
as $\mu\to\infty$ where $\ell^{\top}\one=1$. Hence
\begin{eqnarray}\label{eqn:TVG}
x[k+1]=\Big(\left[\one\ell^{\top}\otimes
e^{AT}\right]+\left[(I-\one\ell^{\top})\otimes(I-BK)e^{AT}\right]\Big)x[k]\,.
\end{eqnarray}
Observe that
$\one\ell^{\top}(I-\one\ell^{\top})=(I-\one\ell^{\top})\one\ell^{\top}=0$.
Moreover, $(\one\ell^{\top})^{k}=\one\ell^{\top}$ and
$(I-\one\ell^{\top})^{k}=I-\one\ell^{\top}$ for $k\geq 1$. Therefore
\begin{eqnarray*}
x[k]&=&\Big(\left[\one\ell^{\top}\otimes
e^{AT}\right]+\left[(I-\one\ell^{\top})\otimes(I-BK)e^{AT}\right]\Big)^{k}x[0]\\
&=&\Big(\left[\one\ell^{\top}\otimes
e^{AT}\right]^{k}+\left[(I-\one\ell^{\top})\otimes(I-BK)e^{AT}\right]^{k}\Big)x[0]\\
&=&\Big(\left[\one\ell^{\top}\otimes
e^{AkT}\right]+\left[(I-\one\ell^{\top})\otimes\left[(I-BK)e^{AT}\right]^{k}\right]\Big)x[0]\,.
\end{eqnarray*}
Then, thanks to \eqref{eqn:constraint}, setting $k=n$ yields
\begin{eqnarray*}
x[n]=\left[\one\ell^{\top}\otimes e^{AnT}\right]x[0]=\one\otimes
\left(e^{AnT}\left[\ell^{\top}\otimes I\right]x[0]\right)
\end{eqnarray*}
which means $x_{i}[n]=e^{AnT}\left[\ell^{\top}\otimes I\right]x[0]$
for all $i$. The result follows by Remark~\ref{rem:one}.
\end{proof}
\vspace{0.12in}

An interesting observation is in order here: The deadbeat character
of synchronization established in Theorem~\ref{thm:adumbration} is
not lost under time-varying communication topology (as long as
$\mu=\infty$). That is, even under a time-varying Laplacian matrix,
the individual solutions still converge to a common trajectory in
finite time, provided that $\Gamma$ is continuous and the associated
graph admits a spanning tree at times $t=T,\,2T,\,\ldots$ Here is
why. Under time-varying $\Gamma$, the difference
equation~\eqref{eqn:TVG} gets generalized to
\begin{eqnarray*}
x[k+1]=\Big(\left[\one\ell_{k}^{\top}\otimes
e^{AT}\right]+\left[(I-\one\ell_{k}^{\top})\otimes(I-BK)e^{AT}\right]\Big)x[k]
\end{eqnarray*}
where $\ell_{k}$ is the left eigenvector of $\Gamma$ at $t=kT$ for
the eigenvalue at the origin, with the property
$\ell_{k}^{\top}\one=1$. By inspection one realizes
$\left[\one\ell_{k}^{\top}\otimes
e^{AT}\right]+\left[(I-\one\ell_{k}^{\top})\otimes(I-BK)e^{AT}\right]=\left[I\otimes
(I-BK)e^{AT}\right]+\left[\one\ell_{k}^{\top}\otimes
BKe^{AT}\right]$. This allows us to write
\begin{eqnarray*}
x[k+1]&=&\Big(\left[I\otimes
(I-BK)e^{AT}\right]+\left[\one\ell_{k}^{\top}\otimes
BKe^{AT}\right]\Big)x[k]\\
&=&\left[I\otimes
(I-BK)e^{AT}\right]x[k]+\one\otimes\Big(\left[\ell_{k}^{\top}\otimes
BKe^{AT}\right]x[k]\Big)
\end{eqnarray*}
which implies for all $i$
\begin{eqnarray*}
x_{i}[k+1]=\left[(I-BK)e^{AT}\right]x_{i}[k]+\left[\ell_{k}^{\top}\otimes
BKe^{AT}\right]x[k]\,.
\end{eqnarray*}
Since for all $i$ the second term $\left[\ell_{k}^{\top}\otimes
BKe^{AT}\right]x[k]$ is the same, for any pair $(i,\,j)$ the error
$e_{ij}=x_{i}-x_{j}$ must evolve through
\begin{eqnarray*}
e_{ij}[k+1]=\left[(I-BK)e^{AT}\right]e_{ij}[k]
\end{eqnarray*}
which, by \eqref{eqn:constraint}, yields $e_{ij}[n]=0$. That is,
$x_{i}[n]=x_{j}[n]$ and the deadbeat behavior is hence preserved.
\vspace{0.12in}

\noindent{\bf Proof of Theorem~\ref{thm:main}.} Let
$U\in\Complex^{q\times(q-1)}$ be such that
\begin{eqnarray*}
\Gamma[\one\ \ U]=[\one\ \ U]\left[\begin{array}{cc}
0&0\\
0&\Lambda
\end{array}\right]
\end{eqnarray*}
where $\Lambda\in\Complex^{(q-1)\times(q-1)}$ is an upper block
triangular matrix whose diagonal entries are the nonzero eigenvalues
of $\Gamma$. Namely,
\begin{eqnarray*}
\Lambda=\left[\begin{array}{cccc}
\lambda_{2}&\ast&\cdots&\ast\\
0&\lambda_{3}&\cdots&\ast\\
\vdots&\vdots&\ddots&\vdots\\
0&0&\cdots&\lambda_{q}
\end{array}\right]\,.
\end{eqnarray*}
Then let $V\in\Complex^{q\times(q-1)}$ be such that $[\one\ \
U]^{-1}=[\ell\ \ V]^{\top}$. Note that this implies
$V^{\top}\Gamma=\Lambda V^{\top}$. Introduce the auxiliary variables
$\eta\in\Real^{n}$ and $\xi\in\Complex^{(q-1)n}$ as
$\eta=\left[\ell^{\top}\otimes I\right]x$ and
$\xi=\left[V^{\top}\otimes I\right]x$. Since
$\one\ell^{\top}+UV^{\top}=[\one\ \ U][\ell\ \ V]^{\top}=I$, we can
decompose $x$ as
\begin{eqnarray*}
x=[\one\otimes I]\eta+[U\otimes I]\xi\,.
\end{eqnarray*}
This decomposition tells us that if $\xi[k]\to 0$ as $k\to\infty$
then $\|x_{i}[k]-\eta[k]\|\to 0$ for all $i$; meaning the
network~\eqref{eqn:closedloop} is synchronous by
Remark~\ref{rem:one}. Now we establish that the
condition~\eqref{eqn:mu} guarantees the convergence $\xi[k]\to 0$.
Observe that $\left[e^{-\Gamma\mu}\otimes
e^{AT}\right]+\left[(I-e^{-\Gamma\mu})\otimes(I-BK)e^{AT}\right]=\left[I\otimes
(I-BK)e^{AT}\right]+\left[e^{-\Gamma\mu}\otimes BKe^{AT}\right]$.
Also note that $V^{\top}\Gamma=\Lambda V^{\top}$ implies
$V^{\top}e^{-\Gamma\mu}=e^{-\Lambda\mu}V^{\top}$. Hence by
\eqref{eqn:DT} we can write
\begin{eqnarray}\label{eqn:ksi}
\xi[k+1]&=&\left[V^{\top}\otimes I\right]x[k+1]\nonumber\\
&=&\left[V^{\top}\otimes I\right]\Big(\left[e^{-\Gamma\mu}\otimes
e^{AT}\right]+\left[(I-e^{-\Gamma\mu})\otimes(I-BK)e^{AT}\right]\Big)x[k]\nonumber\\
&=&\Big(\left[V^{\top}\otimes
(I-BK)e^{AT}\right]+\left[V^{\top}e^{-\Gamma\mu}\otimes BKe^{AT}\right]\Big)x[k]\nonumber\\
&=&\Big(\left[V^{\top}\otimes
(I-BK)e^{AT}\right]+\left[e^{-\Lambda\mu}V^{\top}\otimes BKe^{AT}\right]\Big)x[k]\nonumber\\
&=&\Big(\left[I\otimes
(I-BK)e^{AT}\right]+\left[e^{-\Lambda\mu}\otimes BKe^{AT}\right]\Big)\left[V^{\top}\otimes I\right]x[k]\nonumber\\
&=&\underbrace{\Big(\left[I\otimes
(I-BK)e^{AT}\right]+\left[e^{-\Lambda\mu}\otimes
BKe^{AT}\right]\Big)}_{\displaystyle\Phi}\xi[k]\,.
\end{eqnarray}
In the light of \eqref{eqn:ksi} the convergence $\xi[k]\to 0$ is
equivalent to that all the eigenvalues of the matrix $\Phi$ are in
the open unit disc. Since $\Lambda$ is upper triangular, so is
$e^{-\Lambda\mu}$ with diagonal entries
$e^{-\lambda_{2}\mu},\,e^{-\lambda_{3}\mu},\,\ldots,\,e^{-\lambda_{q}\mu}$.
Thus the matrix $\Phi$ is upper block triangular with diagonal block
entries
$\left[(I-BK)e^{AT}+e^{-\lambda_{i}\mu}BKe^{AT}\right]=:D_{i}$ for
$i=2,\,3,\,\ldots,\,q$. Let now $\varphi\in\Complex$ be an arbitrary
eigenvalue of $\Phi$. Then $\varphi$ must be an eigenvalue of some
block $D_{i}$. Recall that all the eigenvalues of
$\left[(I-BK)e^{AT}\right]$ are zero due to \eqref{eqn:constraint}.
Combining this with the result \cite[Thm.~7.2.3]{golub96} from
perturbation theory we reach to the following implication
\begin{eqnarray*}
\left\|e^{-\lambda_{i}\mu}BKe^{AT}\right\|\times\sum_{k=0}^{n-1}\|N\|^{k}<1
\implies |\varphi|<1
\end{eqnarray*}
where $N$ satisfies \eqref{eqn:schur}. It is easy to check that the
left-hand side of the implication is satisfied for all
$i\in\{2,\,3,\,\ldots,\,q\}$ if $\mu$ satisfies \eqref{eqn:mu}
thanks to the ordering ${\rm Re}(\lambda_{2})\leq\cdots\leq{\rm
Re}(\lambda_{q})$. Hence the result. \hfill\null\hfill$\blacksquare$

\section{Conclusion}

In this paper we investigated the linear network
\eqref{eqn:closedloop} where the coupling between the units is
realized through periodic impulses. We showed that if the state
feedback gain $K$ is chosen to satisfy the {\em deadbeat} condition
\eqref{eqn:constraint} then for large enough coupling strength $\mu$
the units throughout the network asymptotically synchronize. The
identity $KB=1$, which comes for free under the deadbeat condition,
renders $[BK]$ a projection matrix. This observation was essentially
what made our analysis here possible. This is also the flower pot
under which is the key for generalization. Namely, if one replaces
the condition~\eqref{eqn:constraint} with the weaker assumption:
\begin{center}
{\em All the eigenvalues of $\left[(I-BK)e^{AT}\right]$ are in the
open unit disc and $[BK]$ is a projection matrix.}
\end{center}
then it should still hold that synchronization is achieved for $\mu$
large enough.

\bibliographystyle{plain}
\bibliography{references}
\end{document}